\documentclass[11pt]{amsart}

\usepackage[dvipsnames]{xcolor}
\usepackage{todonotes}
\usepackage[utf8]{inputenc}
\usepackage{comment,enumerate,graphicx}
\usepackage{enumitem,amsmath,amssymb}
\usepackage{amsthm}
\usepackage{thmtools}
\usepackage{url}
\usepackage{caption}
\usepackage{subcaption}
\usepackage[top=23 mm, bottom=23 mm, left=21 mm, right= 21 mm]{geometry}
\usepackage{relsize}
\usepackage{microtype}

\usepackage[hidelinks]{hyperref}
\makeatletter
\newcommand{\labelinthm}[1]{%
   \label{temp#1}
   \protected@write \@auxout {}{\string \newlabel{#1}{{\emph{\ref{temp#1}}}{\thepage}{\emph{\ref{temp#1}}}{temp#1}{}} }%
}
\makeatother
\usepackage{adjustbox}
\usepackage{tikz}
\usetikzlibrary{math,calc,intersections, scopes}
\usetikzlibrary{positioning,arrows,shapes,decorations.markings,decorations.pathreplacing, decorations.pathmorphing,matrix,patterns}
\tikzstyle{vertex}=[circle,draw=black,fill=black,inner sep=0,minimum size=5pt,text=white,font=\footnotesize]
\tikzstyle{redvertex}=[circle,draw=red,fill=red,inner sep=0,minimum size=5pt,text=white,font=\footnotesize]
\definecolor{amber}{rgb}{1.0, 0.75, 0.0}
\definecolor{darkgreen}{rgb}{0.18, 0.7, 0.46}

\declaretheorem[name=Theorem]{theorem}

\newtheorem{claim}[theorem]{\bf Claim}
\newtheorem{proposition}[theorem]{\bf Proposition}
\newtheorem{conjecture}[theorem]{\bf Conjecture}

\newtheorem*{theorem*}{\bf Theorem}

\theoremstyle{definition}

\newenvironment{claimproof}{%
  \renewcommand{\qedsymbol}{$\boxdot$}%
  \begin{proof}%
}{%
  \end{proof}%
  \renewcommand{\qedsymbol}{$\square$}%
}

\newcommand\claimproofend{\renewcommand{\qedsymbol}{$\boxdot$}
\end{proof}
\renewcommand{\qedsymbol}{$\square$}
}
\def\eps{\varepsilon}

\def\bE{\mathbb{E}}
\def\bR{\mathbb{R}}

\def\Pb{\mathbb{P}}

\title{ On the Probability a Weighted Bernoulli Sum Exceeds Its Mean}
\date{}

\author{Aleksa Milojevi\'c}
\author{Benny Sudakov}
\thanks{Department of Mathematics, ETH Z\"urich, Switzerland. Email: {\tt \{aleksa.milojevic, benjamin.sudakov\}@math.ethz.ch}. Research supported in part by SNSF grant 200021-228014.}

\begin{document}

\maketitle
\vspace{-0.5cm}
\begin{abstract}
Let $w_1, \dots, w_m$ be positive real weights whose sum is $1$, and let $v_1, \dots, v_m$ be i.i.d. Bernoulli$(p)$ random variables. If we let $X=\sum_{i=1}^m w_i v_i$, then we conjecture that for all $0\leq p\leq 1/3$ we have
\[\Pb\big[X\geq \bE[X]\big]\geq p.\] 
In this short note, we observe a connection of this conjecture with a version of the Manickam--Mikl\'os--Singhi conjecture, which allows one to prove it for sufficiently small values of $p$.
\end{abstract}

\section{Introduction}

Let $w\in \bR_{>0}^m$ be a vector of positive weights satisfying $\sum_{i=1}^m w_i=1$, and let $v=(v_1, \dots, v_m)\in \{0, 1\}^m$ be a random binary vector, in which every coordinate equals $1$ independently and with some fixed probability $p$. Then, the random variable $X=\langle w, v\rangle=\sum_{i=1}^m w_iv_i$ takes values in $[0,1]$ and has $\bE[X]=p$. 

One of the standard questions in probability theory is characterizing the behavior of a random variable around its expectation.
In this note, we study lower bounds on the probability that $X$ exceeds its mean, which equals $p$. In general, one cannot expect a lower bound better than $p$, because of the following example. If $m=1$ and $w_1=1$, then $X=v_1$ and $\Pb[X\ge \bE[X]]=\Pb[v_1=1]=p$. The following conjecture asserts that this example is indeed the worst one, at least when $p < 1/3$.

\begin{conjecture}\label{conj:prob}
Let $0\leq p\leq 1/3$ be a fixed real number. Let $w\in \bR_{>0}^m$ be such that $\sum_{i=1}^m w_i=1$, and let $v\in \{0, 1\}^m$ be a random vector with i.i.d. Bernoulli$(p)$ coordinates. Then
\[\Pb[\langle v, w\rangle\geq p]\geq p.\]
\end{conjecture}

To the best of our knowledge, the first result proved in this direction comes from the work of Alon, Emek, Feldman and Tennenholtz~\cite{AEFT13} on information leakage in adversarial games, where they have shown that $\Pb[X\ge \bE[X]]\ge p(1-p)/10$ for all $p$, which is tight up to a constant factor for $p\leq 1/2$. The question was also later asked in \cite{MathStackExchange}, and a variant of it was studied in \cite{FMP23}. The main question here is whether one can replace this by the sharp bound $p$, at least when $p\leq 1/3$.

It is also worth noting that \cite{AEFT13} already proposes a very elegant coupling argument showing that Conjecture~\ref{conj:prob} is true if $p$ is of the form $p=\frac{1}{n}$ for some positive integer $n$. In fact, the proof in this note is inspired by their argument. Also, the reason for the $p\leq 1/3$ assumption is that when $p>1/3$ the statement is simply not true (unless $p=1/2$), as we will show in Proposition~\ref{prop:counterexample}.

Let us also note that in the special case where all weights are equal (so that $mX\sim \mathrm{Bin}(m,p)$), related lower bounds for $\Pb[\mathrm{Bin}(m,p)\ge mp]$ have been studied; see, e.g., Greenberg--Mohri~\cite{GM14}, Pelekis--Ramon~\cite{PR16}, and Doerr~\cite{Doe18}. Finally, let us mention that Conjecture~\ref{conj:prob} is also vaguely reminiscent of Feige's conjecture, which states that for independent positive random variables $X_1, \dots, X_n$, each of which has expectation $1$, we have $\Pb[X_1+\dots+X_n\geq n+1]\leq 1-1/e$. 

The purpose of this note is to explain an unexpected (at least to us) connection of this problem to the classical Manickam--Mikl\'os--Singhi (MMS) conjecture on the number of nonnegative $k$-sums.

\begin{conjecture}\label{conj:MMS}
Let $n,k$ be positive integers, with $n\ge 4k$. For any real numbers $x_1,\dots,x_n$ whose sum is $0$, at least $\binom{n-1}{k-1}$ of the $k$-element subsets $S\subseteq [n]$ satisfy
\[
  \sum_{i\in S} x_i \ge 0.
\]
\end{conjecture}

This question was first raised by Bier and Manickam~\cite{BM87}, who proved Conjecture~\ref{conj:MMS} when $k|n$ or when $n$ is much larger than $k$, and this exact form of the conjecture was later put forward by Manickam and Mikl\'os~\cite{MM88} and also by Manickam and Singhi~\cite{MS88}. In 2012, Tyomkyn~\cite{Tyom12} significantly improved the quantitative bounds, proving the conjecture for $n>k(4e\log k)^k$. Later, Alon, Huang and Sudakov~\cite{AHS11} proved the polynomial bound and showed the conjecture when $n\ge 33k^2$. Finally, a linear bound $n\geq 10^{46} k$ was obtained by Pokrovskiy~\cite{Pok15}.

The following construction, due to Bier and Manickam~\cite{BM87}, shows that the constant $4$ in Conjecture~\ref{conj:MMS} cannot be lowered all the way to $3$.  Let $\alpha>3$ be the real root of $(\alpha-1)^3=\alpha^2$; numerically, $\alpha\approx 3.1479$. Fix any rational $c\in (3, \alpha)$ and take large integers $k, n$ such that $n/k=c$. Set $x_1=x_2=x_3=-1$, and set each of the remaining $n-3$ numbers equal to $3/(n-3)$. The total sum is zero. If $n>3k$, then every $k$-element set containing one of the negative entries has negative sum, since $-1+(k-1)\frac{3}{n-3}<0$.
Hence the only $k$-element sets with nonnegative sum are those $\binom{n-3}{k}$ sets which avoid the three negative entries. On the other hand, the inequality
\[
  \binom{n-3}{k}<\binom{n-1}{k-1}
\]
is equivalent to $(n-k)(n-k-1)(n-k-2)<k(n-1)(n-2)$.
As $n=ck$, the two sides have leading terms $(c-1)^3k^3$ and $c^2k^3$, respectively. Thus, for every fixed $3<c<\alpha$ and sufficiently large $k$, this construction gives fewer than $\binom{n-1}{k-1}$ nonnegative $k$-sums.

Despite the fact that Conjecture~\ref{conj:MMS} itself is false when $n$ is close to the $3k$, we believe that the following restricted variant might still hold essentially all the way down to $3k$.

\begin{conjecture}    
\label{q:MMS_variant}
For any constants $\eps, C>0$, and every sufficiently large integers $n, k$ satisfying $n\geq (3+\eps)k$, we have the following. Let $x_1, \dots, x_n$ be real numbers whose sum is $0$, such that at most $C$ among them are nonnegative and such that any two negative values $x_i, x_j$ are equal. Then, at least $\binom{n-1}{k-1}$ of the $k$-element subsets $S\subseteq [n]$ have nonnegative sum.
\end{conjecture}

The main purpose of this note is to prove the following theorem. This also shows that if the Manickam-Mikl\'os-Singhi conjecture is true, then Conjecture~\ref{conj:prob} holds for all $p\leq 1/4$. Combined with the result of Pokrovskiy~\cite{Pok15}, our following theorem shows that Conjecture~\ref{conj:prob} holds for all $0\leq p\leq 10^{-46}$.

\begin{theorem}\label{thm:main}
If Conjecture~\ref{q:MMS_variant} holds for all pair $(n, k)$ with $n\geq ck$, then Conjecture~\ref{conj:prob} is true for $p< 1/c$. In particular, if Conjecture~\ref{q:MMS_variant} holds for all $n\geq (3+o(1))k$, then Conjecture~\ref{conj:prob} holds for all $p\leq 1/3$.
\end{theorem}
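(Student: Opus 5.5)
The plan is to generalize the coupling of \cite{AEFT13} for $p=1/n$ by reducing the random sum $X=\langle v,w\rangle$ to a random $k$-subset sum of $n$ numbers whose total is zero, and then apply Conjecture~\ref{q:MMS_variant} with $\eps$ chosen so that $c$ matches $3+\eps$ and $C=m$. It suffices to treat rational $p=k/n<1/c$ first. Here we may scale $(k,n)$ to $(tk,tn)$ for any integer $t\ge 1$. This lets us take $n,k$ as large as Conjecture~\ref{q:MMS_variant} requires and also $n>1/\min_i w_i$, while keeping $n\ge ck$.

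\textbf{Coupling.} Give each item $i\in[m]$ an independent uniformly random label $\ell_i\in[n]$. Independently, choose a uniformly random $k$-subset $S\subseteq[n]$, and set $v_i=\mathbf 1[\ell_i\in S]$. For each fixed $S$ the labels are independent, so the $v_i$ are i.i.d.\ Bernoulli$(k/n)$; this therefore remains true unconditionally, and $X=\langle v,w\rangle$ has the right law. Now condition on the labels. Let $y_j=\sum_{i:\ell_i=j}w_i$ be the weight of class $j$, so that $\sum_j y_j=1$ and $X=\sum_{j\in S}y_j$. Put $x_j=y_j-1/n$. Then $\sum_j x_j=0$, and $X\ge p$ holds exactly when $\sum_{j\in S}x_j\ge 0$, because $|S|\cdot\frac1n=k/n=p$.

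\textbf{Checking the hypotheses of Conjecture~\ref{q:MMS_variant}.} At most $m$ classes are nonempty, so at most $C=m$ of the $x_j$ can be nonnegative. This is where the choice $n>1/\min_i w_i$ is used. Every nonempty class has $y_j\ge\min_i w_i>1/n$, so $x_j>0$ for it. Every empty class has $x_j=-1/n$. Hence the negative values are exactly those of the empty classes, and they are all equal. I expected the equal-negatives condition to be the main obstacle; taking $n$ huge relative to the fixed weights disposes of it. Conjecture~\ref{q:MMS_variant} then gives at least $\binom{n-1}{k-1}$ good $k$-sets. So for every labeling, the conditional probability of $X\ge p$ over $S$ is at least $\binom{n-1}{k-1}/\binom nk=k/n=p$. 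Averaging over labelings gives $\Pb[X\ge p]\ge p$.

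\textbf{Irrational $p$.} Couple Bernoulli vectors for all parameters at once through independent uniforms $U_i$, setting $v^{(q)}_i=\mathbf 1[U_i\le q]$. Take rationals $q\downarrow p$ with $q<1/c$. Whenever $v^{(q)}=v^{(p)}$ and $X_q\ge q$, we have $X_p=X_q\ge q>p$. Since $\Pb[v^{(q)}\ne v^{(p)}]\le m(q-p)$,
\[\Pb[X_p\ge p]\ge \Pb[X_q\ge q]-m(q-p)\ge q-m(q-p).\]
Letting $q\to p$ gives $\Pb[X_p\ge p]\ge p$.

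\textbf{The ``in particular'' statement.} If Conjecture~\ref{q:MMS_variant} holds for $n\ge(3+\eps)k$ for every $\eps>0$, apply the above with $c=3+\eps$ to get Conjecture~\ref{conj:prob} for $p<1/(3+\eps)$, hence for all $p<1/3$. For $p=1/3$, either use the same limiting argument from below, or note that $1/3$ is of the form $1/n$, a case already settled by the argument of \cite{AEFT13}.
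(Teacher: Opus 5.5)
Your proof is correct and follows essentially the same route as the paper: the same random-label coupling (the paper's random matrix with one $1$ per column), the same shift $x_j=y_j-1/n$ with $n$ scaled up so that $1/n<\min_i w_i$, and the same application of Conjecture~\ref{q:MMS_variant} with $C=m$. The only difference is minor: for irrational $p$ you approximate from above using the mismatch bound $m(q-p)$, whereas the paper approximates from below using the monotone coupling $v^{(q)}\le v^{(p)}$ and continuity of $\Pb[X\ge q]$ as $q\uparrow p$; both arguments are valid.
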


\section{Proofs}

\begin{proof}[Proof of Conjecture~\ref{conj:prob} assuming a positive answer to Conjecture~\ref{q:MMS_variant}.]
We split this proof into two parts - first, we will show that the positive answer to Conjecture~\ref{q:MMS_variant} implies Conjecture~\ref{conj:prob} for rational numbers $p=k/n\in [0, 1/c)$. Then we will prove the second part of our statement, i.e. the full Conjecture~\ref{conj:prob} via a simple limiting argument.

Thus, let us assume that we have $p=\frac{k_0}{n_0}$, a rational number, and let $w\in \bR_{>0}^m$ be the corresponding weight vector. Let $t$ be a large integer, and let us set $k=k_0t, n=n_0t$.

The crucial object of the proof is a random $0/1$-matrix $M$, with $n$ rows and $m$ columns. To generate $M$, we will fill out every column with $n-1$ zeros and exactly $1$ one, with the position of the $1$ being uniformly picked among the $n$ possible rows, independently of all other decisions. If we denote the rows of $M$ by $r_1, \dots, r_n$, we calculate the inner products $y_i=\langle w, r_i\rangle$, and we choose a $k$-element set $S\subset [n]$ uniformly at random. Finally, we set $Y=\sum_{i\in S}y_i$. 

\begin{claim}
The random variable $Y$ has the same distribution as the inner product $X=\langle w, v\rangle$.
\end{claim}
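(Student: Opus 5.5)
The approach is to write $Y$ directly as an inner product with $w$, namely $Y=\langle w, u\rangle$ for the random vector $u\in\{0,1\}^m$ given by $u_j=\sum_{i\in S}M_{ij}$. Then it suffices to show that $u$ has i.i.d.\ Bernoulli$(k/n)$ coordinates, since $k/n=k_0/n_0=p$. Linearity gives
\[
Y=\sum_{i\in S}\langle w, r_i\rangle=\sum_{j=1}^m w_j\sum_{i\in S}M_{ij}.
\]
Each column of $M$ has exactly one $1$, so $u_j\in\{0,1\}$. Moreover, $u_j=1$ exactly when the row index $R_j$ of the unique $1$ in column $j$ lies in $S$.

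The next step is to compute the joint law of $(u_1,\dots,u_m)$. Condition on $S$. The positions $R_1,\dots,R_m$ are i.i.d.\ uniform on $[n]$ and independent of $S$. Hence, conditionally on $S$, the events $\{R_j\in S\}$ are independent, and each has probability $|S|/n=k/n$. For any $a\in\{0,1\}^m$ this gives
\[
\Pb[u=a\mid S]=p^{|a|}(1-p)^{m-|a|},
\]
and this expression does not depend on $S$. Averaging over $S$ then leaves the same formula, so $u$ has the same distribution as $v$. It follows that $Y=\langle w,u\rangle$ and $X=\langle w,v\rangle$ have the same distribution.

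There is essentially no obstacle here. The one point to be careful about is that $S$ is chosen independently of $M$; this is what lets us condition on $S$ and get a conditional law that does not depend on $S$. I note that the $y_i$ themselves are dependent, being an exchangeable multinomial-type split of the total weight $1$. This causes no difficulty, because the argument works column by column rather than row by row. The role of $t$ is only to make $k,n$ large later, when Conjecture~\ref{q:MMS_variant} is applied; the claim holds for every $t$.
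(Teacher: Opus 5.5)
Your proof is correct and follows the same approach as the paper. Both arguments write $Y=\langle w,\sum_{i\in S} r_i\rangle$, observe that for each fixed $S$ the coordinates of $\sum_{i\in S} r_i$ are i.i.d.\ Bernoulli$(k/n)$, and then average over $S$; you simply make explicit the independence across columns and the independence of $S$ from $M$, which the paper uses implicitly.
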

\begin{claimproof}
Since $Y=\langle w, \sum_{i\in S}r_i\rangle$, it suffices to show that $\sum_{i\in S}r_i$ follows the same distribution as $v$. For any fixed set $S$ and $j\in [m]$, the $j$-th coordinate of $\sum_{i\in S}r_i$ is $1$ precisely if one of the rows corresponding to $S$ contains a $1$ in the $j$-th column, which happens exactly with probability $\frac{|S|}{n}=\frac{k}{n}=p$. Since the distribution of $\sum_{i\in S}r_i$ is the same as the distribution of $v$ for any fixed set $S$, the same is true if $S$ is chosen uniformly at random.
\end{claimproof}

\begin{claim}
We have $\Pb[Y\geq p]\geq p$.
\end{claim}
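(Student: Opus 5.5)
The plan is to condition on the random matrix $M$, reduce the event $Y\ge p$ to a statement about nonnegative $k$-sums of a zero-sum sequence, and then apply Conjecture~\ref{q:MMS_variant} to each fixed realization of $M$. Since $S$ is chosen uniformly and independently of $M$, it suffices to show that for \emph{every} outcome of $M$ at least $\binom{n-1}{k-1}=\frac{k}{n}\binom{n}{k}=p\binom{n}{k}$ of the $k$-sets $S$ satisfy $\sum_{i\in S}y_i\ge p$. Averaging over $M$ then gives $\Pb[Y\ge p]\ge p$.

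So fix $M$. Every column of $M$ contains exactly one $1$, so $\sum_{i=1}^n y_i=\sum_{j=1}^m w_j=1$. Define $x_i=y_i-\frac1n$. Then $\sum_i x_i=0$, and for any $k$-set $S$ we have $\sum_{i\in S}x_i=Y-\frac kn=Y-p$. Hence $Y\ge p$ holds exactly when the corresponding $k$-sum of the $x_i$ is nonnegative.

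Next I check the structural hypotheses of Conjecture~\ref{q:MMS_variant}.
\begin{itemize}
\item A row $r_i$ containing no $1$ has $y_i=0$, so $x_i=-\frac1n$. Thus all these values are equal.
\item A row containing at least one $1$ has $y_i\ge \min_j w_j$. Since $m$ and $w$ are fixed while $n=n_0t\to\infty$, we may take $t$ large enough that $\frac1n<\min_j w_j$. Then every such $x_i$ is strictly positive.
\end{itemize}
So the negative entries are exactly the rows without a $1$, and they all equal $-\frac1n$. The nonnegative entries are the rows containing a $1$, and there are at most $m$ of them. We therefore take $C=m$, a constant independent of $t$.

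Finally, $n/k=n_0/k_0=1/p>c$, so $n\ge ck$ holds for every $t$. Also $n,k\to\infty$ as $t\to\infty$, so for $t$ sufficiently large the assumed Conjecture~\ref{q:MMS_variant} applies to the pair $(n,k)$ with this $C$. It gives at least $\binom{n-1}{k-1}$ nonnegative $k$-sums for every realization of $M$, which completes the argument.

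The only step needing care is that small nonzero rows might have $0<y_i<\frac1n$. They would then be negative but not equal to $-\frac1n$, which would violate the equal-negatives hypothesis. This is exactly why $t$ must be chosen large in terms of $\min_j w_j$. Because $w$ and $m$ are fixed before $t$, this causes no difficulty, and the previous Claim, that $Y$ and $X$ have the same distribution, holds for every $t$.
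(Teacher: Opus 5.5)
Your proposal is correct and follows the paper's proof essentially step for step. You shift by $\frac1n$ to get a zero-sum sequence, take $t$ large so that the only negative value is $-\frac1n$ and at most $m$ entries are nonnegative, and apply Conjecture~\ref{q:MMS_variant} with $C=m$ for each fixed $M$. Your explicit averaging over $M$ just spells out what the paper leaves implicit.
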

\begin{claimproof}
Let us define $z_i=y_i-\frac{1}{n}$. First, observe that $y_i$ either takes value $0$ (if all coordinates of $r_i$ are $0$), or it takes value at least $\min_{j\in [m]}\{w_j\}>0$. Further, observe that if $t$ is sufficiently large, then $\frac{1}{n}=\frac{1}{n_0t}<\min_{j\in [m]}\{w_j\}$. Therefore, the only distinct negative value which appears among $z_i$'s is $-\frac{1}{n}$, and there are only $m$ positive $z_i$'s.

Also, since the sum of coordinates of $w$ is $1$ and $M$ contains exactly one $1$ per column, we have \[\sum_{i=1}^n y_i=\langle w, \sum_{i=1}^n r_i\rangle=\langle w, \mathbf{1}\rangle=1.\]
Thus, $\sum_{i=1}^n z_i=0$. 

Finally, since $p=\frac{k}{n}<\frac{1}{c}$, we have $n\geq ck$. We may also assume $n$ and $k$ are sufficiently large by increasing $t$. Thus, we can apply statement from Conjecture~\ref{q:MMS_variant} with $C=m$ to the numbers $z_1, \dots, z_n$ to conclude that a uniform random set $S\subset[n]$ of size $k$ has nonnegative sum with probability at least $\binom{n-1}{k-1}/\binom{n}{k}=\frac{k}{n}=p$. 

Since $\sum_{i\in S}z_i\geq 0$ is equivalent to $Y=\sum_{i\in S} y_i\geq \frac{|S|}{n}=p$, we conclude that with probability at least $p$ over the choice of $S$ we have $Y\geq p$, as we wanted to show.
\end{claimproof}

Combining the two claims above shows that $\Pb[\langle v, w\rangle\geq p]=\Pb[Y\geq p]\geq p$ whenever $p$ is a rational smaller than $\frac{1}{c}$. To show the conjecture when $p$ is irrational, let $p_i$ be a sequence of rationals approaching $p$ from below and observe that $\Pb[\langle w, v\rangle \geq p_i]\geq p_i$ (where the entries of $v$ are still distributed according to Bernoulli$(p)$, as before). This inequality follows directly from our argument above, since $v$ can be coupled with a vector $v_i$ whose coordinates are distributed according to $Ber(p_i)$, so that we always have that $v$ dominates $v_i$ on every coordinate. The conclusion is that 
\[\Pb[\langle w, v\rangle\geq p]=\lim_{p_i\to p}\Pb[\langle w, v\rangle\geq p_i]\geq \lim_{p_i\to p}p_i=p\qedhere\]
\end{proof}

\begin{proposition}\label{prop:counterexample}
Conjecture~\ref{conj:prob} does not hold for any $p\in (1/3, 1)$, with the exception of $p=1/2$. 
\end{proposition}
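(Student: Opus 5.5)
The plan is to exhibit, for each $p\in(1/3,1)\setminus\{1/2\}$, an explicit weight vector $w$ with equal weights for which $\Pb[\langle v,w\rangle\ge p]<p$. The point is that with $m$ equal weights $1/m$, the variable $X$ takes only the values $j/m$. So when $p$ sits strictly between two consecutive atoms, exceeding the mean forces $X$ up to the next atom, and the resulting binomial tail should fall below $p$. We split into the two ranges $p\in(1/2,1)$ and $p\in(1/3,1/2)$.

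\emph{Case $p\in(1/2,1)$.} Take $m=2$ and $w=(1/2,1/2)$. Since $1/2<p\le 1$, the event $X\ge p$ is exactly $X=1$, i.e.\ $v_1=v_2=1$. This has probability $p^2$, and $p^2<p$ because $p<1$.

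\emph{Case $p\in(1/3,1/2)$.} Two weights no longer suffice, since then $\Pb[X\ge p]=1-(1-p)^2=2p-p^2\ge p$. So take $m=3$ and $w=(1/3,1/3,1/3)$. Since $1/3<p<2/3$, the event $X\ge p$ is exactly the event that at least two of the $v_i$ equal $1$. Its probability is
\[3p^2(1-p)+p^3=3p^2-2p^3.\]
The inequality $3p^2-2p^3<p$ is equivalent, after dividing by $p>0$, to $2p^2-3p+1>0$, that is, to $(2p-1)(p-1)>0$. This holds precisely when $p<1/2$ (or $p>1$), so it covers the whole interval $(1/3,1/2)$.

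The only real decision is which construction to use in each range; once that is fixed, the remaining steps are the short computations above. The same factorization $(2p-1)(p-1)$ explains the exception: at $p=1/2$ the three-weight example gives exactly $p$. I expect no further obstacle, and the two cases together prove Proposition~\ref{prop:counterexample}.
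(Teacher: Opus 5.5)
Your proof is correct and is essentially the paper's own argument. It uses $w=(1/3,1/3,1/3)$ for $p\in(1/3,1/2)$, where $p-(3p^2-2p^3)=p(1-2p)(1-p)>0$, and $w=(1/2,1/2)$ for $p\in(1/2,1)$, where the probability is $p^2<p$.
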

\begin{proof}
Let us first consider the case $1/3<p<1/2$. In this case, choosing $w=(1/3, 1/3, 1/3)$, we see that $\langle w, v\rangle\geq p$ if and only if $v$ has ones on at least $2$ coordinates. Thus,
\[\Pb[\langle w, v\rangle\geq p]=3p^2(1-p)+p^3=p(3p-2p^2).\]
It is not hard to verify that this quantity is smaller than $p$ for all $p\in (1/3, 1/2)$, since 
\[p-p(3p-2p^2)=p(1-3p+2p^2)=p(1-2p)(1-p)>0.\]
If $1/2<p<1$, we have a similar example, with $w=(1/2, 1/2)$. Again, we see that $\langle w, v\rangle\geq p$ if and only if $v$ has ones on at least $2$ coordinates. Thus,
\[\Pb[\langle w, v\rangle\geq p]=p^2<p.\qedhere\]
\end{proof}

\section{Concluding remarks}

Another very interestingconjecture, closely related to our problem, was made in 1966 by Samuels~\cite{Sam66} (see also \cite{AFHRRS12} for exposition and some combinatorial applications). Given nonnegative reals $\mu_1, \dots, \mu_\ell$ satisfying $\sum_{i=1}^{\ell}\mu_i<1$, he asked to determine
\[
  P(\mu_1,\dots,\mu_\ell)=\inf \Pb[Z_1+\dots+Z_\ell<1],
\]
where the infimum is taken over all collections of independent nonnegative random variables $Z_1, \dots, Z_\ell$ with $\bE[Z_i]=\mu_i$. If $\mu_1\leq \cdots \leq \mu_\ell$, then Samuels' conjecture predicts that
\[
  P(\mu_1,\dots,\mu_\ell)=\min_{0\leq t<\ell} Q_t(\mu_1,
  \dots,\mu_\ell),\quad \text{ where }\quad 
  Q_t(\mu_1,\dots,\mu_\ell)=\prod_{i=t+1}^{\ell}
  \left(1-\frac{\mu_i}{1-\sum_{j=1}^t\mu_j}\right).
\]

Our problem can be viewed as a special case, right at the boundary of Samuels' conjecture. Indeed, after writing $a_i=w_i/p$, the independent nonnegative random variables $Z_i=a_i v_i$ have means $\mu_i=a_i p=w_i$, and therefore $\sum_i \mu_i=1$. Moreover, Conjecture~\ref{conj:prob} is equivalent to
\[
  \Pb\left[\sum_i a_i v_i\geq 1\right]\geq p,
\]
or, equivalently, to the upper bound $\Pb[\sum_i a_i v_i<1]\leq 1-p$. Thus the same quantity which appears in Samuels' conjecture is also present in our question, although here it is considered for a very special family of distributions and at boundary point $\sum_i\mu_i=1$.

Although the conjecture of Samuels studies a much more general problem, it does not provide an immediate route to tackle Conjecture~\ref{conj:prob}. First, Samuels' conjecture is still widely open; in full generality it is known only for $\ell\leq 4$, by the papers of Samuels~\cite{Sam66,Sam68}. Furthermore, even if the conjecture were fully resolved, one would still have to determine, for arbitrary coefficients $a_i$, which of the candidate expressions $Q_t$ achieves the minimum, and then compare the resulting expression with the polynomials arising from $\sum_i a_i v_i$. This by itself seems like a challenging problem.

Finally, we want to mention that after writing this article, we learned that Cs\'oka \cite{Csoka} has also considered generalizations of Manicka-Mikl\'os-Singhi conjecture and obtained some probabilistic inequalities as limit versions of the conjecture.

\medskip
\noindent
\textbf{Acknowledgements.} We would like to thank Byron Chin, Saba Lepsveridze and Alan Yan for pointing out an issue with the original statement of Conjecture~\ref{q:MMS_variant} and for their valuable comments on this paper. Also, we would like to indicate that this note was the basis for a problem in the second batch of the First Proof challenge. We would like to thank the organizers for valuable discussions and materials which helped improve the final presentation of this article.


\begin{thebibliography}{99}
\bibitem{BM87}
T.~Bier and N.~Manickam,
\emph{The first distribution invariant of the Johnson-scheme},
\emph{SEAMS Bull. Math.} \textbf{11} (1987), 61--68.

\bibitem{MM88}
N.~Manickam and D.~Mikl\'os,
\emph{On the number of nonnegative partial sums of a nonnegative sum},
In \emph{Colloq. Math. Soc. Janos Bolyai} \textbf{52} (1987), 385--392.

\bibitem{MS88}
N.~Manickam and N.~M.~Singhi,
\emph{First distribution invariants and EKR theorems},
\emph{J. Combin. Theory Ser. A} \textbf{48} (1988), 91--103.

\bibitem{AEFT13}
N.~Alon, Y.~Emek, M.~Feldman, and M.~Tennenholtz,
\emph{Adversarial leakage in games},
\emph{SIAM J. Discrete Math.} \textbf{27} (2013), 363--385.

\bibitem{AFHRRS12}
N.~Alon, P.~Frankl, H.~Huang, V.~R\"odl, A.~Ruci\'nski, and B.~Sudakov,
\emph{Large matchings in uniform hypergraphs and the conjectures of Erd\H{o}s and Samuels},
\emph{J. Combin. Theory Ser. A} \textbf{119} (2012), 1200--1215.

\bibitem{AHS11}
N.~Alon, H.~Huang, and B.~Sudakov,
\emph{Nonnegative $k$-sums, fractional covers, and probability of small deviations},
\emph{J. Combin. Theory Ser. B}, \textbf{102} (2012), 784--796.

\bibitem{Csoka} E. Cs\'oka,
\emph{Limits of some combinatorial problems},
\emph{Electronic Notes in Discrete Mathematics}, \textbf{49} (2015), 577--581.

\bibitem{MathStackExchange} Alfonso Fernandez, MathStackExchange question, \emph{Convex Combinations of Low Probability Bernoulli Variables}, {\tt https://math.stackexchange.com/questions/265163/convex-combinations-of-low-probability-bernoulli-variables}

\bibitem{FMP23} R. Fokkink, L. Meester, C. Pelekis,
\emph{Optimizing stakes in simultaneous bets},
\emph{ALEA, Lat. Am. J. Probab. Math. Stat.} \textbf{20} (2023), 153--165.

\bibitem{Tyom12}
M.~Tyomkyn,
\emph{An improved bound for the Manickam--Mikl\'os--Singhi conjecture},
\emph{European J. Combin.} \textbf{33} (2012), 27--32.

\bibitem{GM14}
S.~Greenberg and M.~Mohri,
\emph{Tight lower bound on the probability of a binomial exceeding its expectation},
\emph{Statistics \& Probability Letters} \textbf{86} (2014), 91--98.

\bibitem{PR16}
C.~Pelekis and J.~Ramon,
\emph{A lower bound on the probability that a binomial random variable is exceeding its mean},
\emph{Statistics \& Probability Letters} \textbf{119} (2016), 305--309.

\bibitem{Doe18}
B.~Doerr,
\emph{An elementary analysis of the probability that a binomial random variable exceeds its expectation},
\emph{Statistics \& Probability Letters} \textbf{139} (2018), 67--74.

\bibitem{Pok15}
A.~Pokrovskiy,
\emph{A linear bound on the Manickam--Mikl\'os--Singhi conjecture},
\emph{J. Combin. Theory Ser. A} \textbf{133} (2015), 280--306.

\bibitem{Sam66}
S.~M.~Samuels,
\emph{On a Chebyshev-type inequality for sums of independent random variables},
\emph{Ann. Math. Statist.} \textbf{37} (1966), 248--259.

\bibitem{Sam68}
S.~M.~Samuels,
\emph{More on a Chebyshev-type inequality for sums of independent random variables},
Purdue Stat. Dept. Mimeo. Series no. \textbf{155} (1968).

\end{thebibliography}
\end{document}